\documentclass[12pt,letterpaper]{amsart}
\usepackage{amsmath, amsthm, amssymb, xspace, mathrsfs}
\usepackage[tmargin=1.4in,bmargin=1.2in,rmargin=1.4in,lmargin=1.4in]{geometry}
\usepackage[breaklinks=true]{hyperref}
\usepackage{amsmath,amscd}
\usepackage{tikz-cd}

\theoremstyle{plain}
\newtheorem{theorem}{Theorem}[section]

\newtheorem{prop}{Proposition}[section]
\newtheorem{cor}{Corollary}[section]

\theoremstyle{definition}

\newtheorem*{acknowledgement}{Acknowledgements}
\newtheorem*{condition}{Condition}

\newcommand{\comment}[1]{}

\begin{document}
\title[Regularity of the Berezin transform]{On regularity of the Berezin transform on smooth pseudoconvex domains}
\author{Akaki Tikaradze}
\email{ tikar06@gmail.com}
\address{University of Toledo, Department of Mathematics \& Statistics, 
Toledo, OH 43606, USA}

\maketitle
\begin{abstract}

In this short note we improve  some of recent results of \v{C}u\v{c}kovi\'c and \c{S}ahuto\u{g}lu \cite{CS}
concerning regularity of the Berezin transform for a class of smooth pseudoconvex domains.
\end{abstract}

\vspace{0.3in}
Let $\Omega\subset\mathbb{C}^n$ be a bounded pseudoconvex domain. As usual A$^2(\Omega)$ denotes
the Bergman space of square integrable holomorphic functions on $\Omega.$
Let $k_w, w\in \Omega$ denote the normalized Bergman reproducing kernel.
Then given a bounded operator $S:A^2(\Omega)\to A^2(\Omega)$ its Berezin transform is defined
as $B(S)(z)=\langle S(k_z), k_z\rangle.$ The Berezin transform has been an important tool in the study of
Toeplitz operators. Recall that given $f\in L^{\infty}(\Omega)$, its Toeplitz operator $T_{f}:A^2(\Omega)\to A^2(\Omega)$
is defined as the composition of the multiplication by $f$ followed by the orthogonal projection $L^2(\Omega)\to A^2(\Omega).$
We need also Hankel operators $H_f=m_f-T_f:A^2(\Omega)\to A^2(\Omega)^{\perp}$, where $m_f$ is the multiplication by $f$.
One defines the Berezin transform of a function $f$ as $B(T_f).$

Hereafter,  $\mathcal{T}(\Omega)$ denotes the algebra generated by all Toeplitz operators with symbols continuous
on $\overline{\Omega}$, and  $\mathcal{K}(\Omega)\subset \mathcal{T}(\Omega)$ denotes the ideal
of compact operators.

 In a recent paper \cite{CS}, \v{C}u\v{c}kovi\'c and \c{S}ahuto\u{g}lu introduces and studied the notion of a BC-rugular domain:
 A domain $\Omega\subset \mathbb{C}^n$ is called BC-regular if for any $S\in \mathcal{T}(\Omega)$, its Berezin
 transform $B(S)$ can be continuously extended on $\overline{\Omega}.$
The authors went to prove that (among other results) a bounded smooth convex domain with no analytic discs in the boundary
is a BC-domain.

To state our results we recall a  well-known fact that if $\partial\Omega$ is smooth, then
for any $w\in\partial\Omega, k_z\to 0$ weakly as $z\to w.$ Therefore, if $S$ is a compact operator then
$B(S)$ vanishes on the boundary of $\Omega.$

It will be convenient to use the following definition.

\begin{condition}
Let  $\Omega$ be a smooth bounded pseudoconvex domain.
Then $B_{\Omega}\subset \partial \Omega$ is defined as the set of all $w\in \partial\Omega$ such that
  for any $f\in C(\overline{\Omega})$ we have $$\lim_{z\to w} B(H^*_fH_f)(z)=0.$$

\end{condition}

It is well-known that all strongly pseudoconvex points belong to $B_{\Omega}.$
It is immediate that if $H_f$ is compact for all $f\in C(\overline{\Omega})$, then $B_{\Omega}=\partial\Omega.$
Recall that if the $\bar{\partial}$-Neumann operator is compact, then all $H_f$ are compact operators for
any $f\in C(\overline{\Omega})$ and hence  $B_{\Omega}=\partial\Omega.$

Next we need to recall the following result of Salinas, Sheu and Upmeier about the maximal commutative quotient of $\mathcal{T}(\Omega).$

\begin{theorem}[\cite{SSU}, Theorem 1.4]\label{S}

Let $I$ denote the commutator ideal in $T(\Omega).$
Assume that $\partial(\bar{\Omega})=\partial\Omega.$ Then there is an isometry of $C^*$ algebras $\eta: T(\Omega)/I\cong C(X),$
 where $X\subset \partial\Omega$ is a closed subset.
If in addition $H_f$ is compact for all $f\in C(\overline{\Omega})$, then $K=I$ and $X=\partial(\Omega).$

\end{theorem}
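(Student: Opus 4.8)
The plan is to establish the two assertions separately: the first needs only $\partial\overline{\Omega}=\partial\Omega$, and the second brings in compactness of the Hankel operators. Throughout I will use the standard identities $T_f^*=T_{\bar f}$ and $T_fT_g-T_{fg}=-H_{\bar f}^*H_g$, so that $H_h=0$ for holomorphic $h$ gives $T_gT_h=T_{gh}$ exactly for such $h$.

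\emph{Step 1: the quotient is $C(X)$ with $X\subseteq\partial\Omega$ closed.} Since $I$ is the commutator ideal, $\mathcal{T}(\Omega)/I$ is a commutative unital $C^*$-algebra, so Gelfand--Naimark already supplies an isometric $*$-isomorphism onto $C(X)$ with $X$ its compact maximal ideal space; the content is to realize $X$ inside $\partial\Omega$. I would do this by analyzing an arbitrary character $\chi$ of $\mathcal{T}(\Omega)$ annihilating $I$; put $\lambda_j=\chi(T_{z_j})$ and $\lambda=(\lambda_1,\dots,\lambda_n)$. (i) If $\mu\notin\overline{\Omega}$, the functions $g_j=\overline{(z_j-\mu_j)}/|z-\mu|^2$ lie in $C(\overline{\Omega})$ and satisfy $\sum_j g_j(z_j-\mu_j)\equiv1$; since $z_j-\mu_j$ is holomorphic this gives $\sum_j T_{g_j}T_{z_j-\mu_j}=I$, hence $\sum_j\chi(T_{g_j})(\lambda_j-\mu_j)=1$, so $\lambda\neq\mu$; thus $\lambda\in\overline{\Omega}$. (ii) Again by holomorphicity, $\sum_j (T_{z_j-\lambda_j})^*T_{z_j-\lambda_j}=T_{|z-\lambda|^2}$; if $\lambda\in\Omega$, expanding a holomorphic $L^2$-function in its Taylor series at $\lambda$ and using orthogonality of monomials over a ball $\overline{B(\lambda,r)}\subseteq\Omega$ yields $T_{|z-\lambda|^2}\ge\tfrac{n}{n+1}r^2 I$, so $T_{|z-\lambda|^2}$ is invertible in the $C^*$-algebra $\mathcal{T}(\Omega)$ and $\chi(T_{|z-\lambda|^2})\neq0$, contradicting $\chi(T_{|z-\lambda|^2})=\sum_j|\chi(T_{z_j-\lambda_j})|^2=0$; hence $\lambda\in\partial\Omega$. (iii) If $g\in C(\overline{\Omega})$, $g\ge0$, $g(\lambda)=0$, then for each $\varepsilon>0$ one has $0\le g\le\varepsilon+C_\varepsilon|z-\lambda|^2$ on $\overline{\Omega}$, so $0\le\chi(T_g)\le\varepsilon$, whence $\chi(T_g)=0$; taking $g=|f-f(\lambda)|^2$ and using $T_{|h|^2}\ge T_h^*T_h$ with positivity of $\chi$ gives $|\chi(T_f)-f(\lambda)|^2\le\chi(T_{|f-f(\lambda)|^2})=0$, i.e.\ $\chi(T_f)=f(\lambda)$ for every $f$. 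Thus $\chi$ is determined by $\lambda\in\partial\Omega$; since $\chi\mapsto\lambda$ is weak-$*$ continuous it identifies $X$ with a compact, hence closed, subset of $\partial\Omega$, under which $\eta$ becomes $[T_f]\mapsto f|_X$.

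\emph{Step 2: with all $H_f$ compact, $\mathcal{K}(\Omega)=I$ and $X=\partial\Omega$.} Compactness of every $H_f$ makes every semicommutator $T_fT_g-T_{fg}=-H_{\bar f}^*H_g$ compact, hence every $[T_f,T_g]$ compact; as the $T_f$ generate $\mathcal{T}(\Omega)$, the quotient $\mathcal{T}(\Omega)/\mathcal{K}(\Omega)$ is commutative, i.e.\ $I\subseteq\mathcal{K}(\Omega)$. Moreover $f\mapsto T_f+\mathcal{K}(\Omega)$ is now a surjective unital $*$-homomorphism $C(\overline{\Omega})\to\mathcal{T}(\Omega)/\mathcal{K}(\Omega)$, so $\mathcal{T}(\Omega)/\mathcal{K}(\Omega)\cong C(E)$ where $E$ is the hull of the closed ideal $\{f:T_f\ \text{compact}\}$. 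Here $E=\partial\Omega$: on one hand any $f$ with $f|_{\partial\Omega}=0$ is a uniform limit of functions compactly supported in $\Omega$, for which $T_f$ is compact by interior estimates on Bergman functions, so $E\subseteq\partial\Omega$; on the other hand if $T_f$ is compact then $B(T_f)(z)=\int_\Omega f\,|k_z|^2\,dV\to0$ as $z\to w\in\partial\Omega$ (smoothness of $\partial\Omega$), and since $|k_z|^2\,dV\to\delta_w$ weak-$*$ this forces $f(w)=0$, so $\partial\Omega\subseteq E$. Consequently the surjection $C(X)=\mathcal{T}(\Omega)/I\twoheadrightarrow\mathcal{T}(\Omega)/\mathcal{K}(\Omega)=C(\partial\Omega)$ is dual to a closed embedding $\partial\Omega\hookrightarrow X\subseteq\partial\Omega$, forcing $X=\partial\Omega$; the surjection is then an isomorphism, so $\mathcal{K}(\Omega)=I$.

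\emph{Main obstacle.} Apart from one analytic fact, everything is soft $C^*$-algebra together with the elementary subharmonicity estimate behind invertibility of $T_{|z-\lambda|^2}$ at interior points. The one genuinely hard input — needed precisely for ``$T_f$ compact $\Rightarrow f|_{\partial\Omega}=0$'', hence for $X=\partial\Omega$ — is the boundary concentration of the Bergman kernel on a smooth bounded pseudoconvex domain: $\int_{|w-\lambda|>\delta}|k_z(w)|^2\,dV(w)\to0$ as $z\to\lambda\in\partial\Omega$ for each $\delta>0$, equivalently that $B(T_f)$ extends continuously to $\overline{\Omega}$ with boundary values $f|_{\partial\Omega}$. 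I would invoke this from the literature on the boundary behaviour of the Bergman kernel and the $\overline{\partial}$-Neumann problem rather than reprove it here.
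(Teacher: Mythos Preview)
The paper does not supply its own proof of this statement: Theorem~\ref{S} is quoted from \cite{SSU} (their Theorem~1.4) and is used only as a black box in the proof of Theorem~\ref{Main}. There is therefore no argument in the paper to compare your proposal against.

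For what it is worth, your reconstruction is essentially correct. In Step~1 the identification of the Gelfand spectrum with a closed subset of $\partial\Omega$ via $\chi\mapsto(\chi(T_{z_1}),\dots,\chi(T_{z_n}))$ is the standard route; your invertibility estimate for $T_{|z-\lambda|^2}$ at interior $\lambda$ is right once you combine the ball computation with the trivial bound $\int_{\Omega\setminus B(\lambda,r)}|z-\lambda|^2|f|^2\ge r^2\int_{\Omega\setminus B(\lambda,r)}|f|^2$, and the passage from $\chi(T_{z_j})=\lambda_j$ to $\chi(T_f)=f(\lambda)$ for all $f$ via $T_{|h|^2}\ge T_h^*T_h$ is clean. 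In Step~2 the chain $I\subseteq\mathcal{K}$, $\mathcal{T}(\Omega)/\mathcal{K}\cong C(E)$, $E=\partial\Omega$, and hence $X=\partial\Omega$, $I=\mathcal{K}$, is also the expected line. You have correctly isolated the one genuinely analytic input, the weak-$*$ concentration $|k_z|^2\,dV\to\delta_w$ as $z\to w\in\partial\Omega$, without which one cannot rule out $X\subsetneq\partial\Omega$. One small caveat: the hypothesis in \cite{SSU} is only $\partial\overline{\Omega}=\partial\Omega$, not smoothness of $\partial\Omega$, so strictly speaking your appeal to smooth-boundary facts (weak convergence $k_z\to 0$, kernel concentration) is a bit stronger than what the cited theorem assumes; in the present paper this is harmless since all domains considered are smooth.
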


It follows from the proof that the surjective homomorphism $\eta: T(\Omega)\to C(X)$
is uniquely determined by  the  restriction property $\eta(T_f)=f|_X.$

We show the following result.

\begin{theorem}\label{Main}

Let $ \Omega\subset \mathbb{C}^n$ be a smooth bounded pseudoconvex domain.
Then for any $S\in\mathcal{T}(\Omega)$, its Berezin transform  $B(S)$ can be continuously extended to $\Omega\cup B_{\Omega}$ and the Berezin transform induces a surjective homomorphism on Banach algebras
$ B:\mathcal{T}(\Omega)/\mathcal{K}\to C(B_{\Omega})$ so that $B(T_f)=f|_{B_{\Omega}}.$
 If moreover, $H_f$ is compact for all $f\in C(\overline{\Omega}),$ then the above homomorphism
is an isometry of $C^*$-algebras.

\end{theorem}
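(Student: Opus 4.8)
The plan is to extract from the defining condition of $B_\Omega$ a much stronger \emph{concentration} estimate for the normalized Bergman kernel, and then to propagate boundary values of the Berezin transform through products of Toeplitz operators. \textbf{Step 1 (key estimate).} For the conjugate coordinate functions $f(\zeta)=\bar\zeta_j$ one has $T_{\bar\zeta_j}k_z=\bar z_j k_z$ by the reproducing property, since $\zeta_j h$ is holomorphic for every $h\in A^2(\Omega)$; hence $H_{\bar\zeta_j}k_z=(\bar\zeta_j-\bar z_j)k_z$ and
\[
\sum_{j=1}^n B(H_{\bar\zeta_j}^*H_{\bar\zeta_j})(z)=\sum_{j=1}^n\|H_{\bar\zeta_j}k_z\|^2=\int_\Omega|\zeta-z|^2\,|k_z(\zeta)|^2\,dV(\zeta).
\]
Since each $\bar\zeta_j\in C(\overline\Omega)$, the definition of $B_\Omega$ forces $h(z):=\int_\Omega|\zeta-z|^2|k_z(\zeta)|^2\,dV(\zeta)\to 0$ as $z\to w$ for every $w\in B_\Omega$. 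Splitting the integral defining $\|(f-f(z))k_z\|^2$ over $|\zeta-z|<\delta$ and $|\zeta-z|\ge\delta$, and using uniform continuity of $f$ on $\overline\Omega$ together with $\|k_z\|=1$, one gets the \emph{key lemma}: for every $w\in B_\Omega$ and every $f\in C(\overline\Omega)$, $\lim_{z\to w}\|(f-f(z))\,k_z\|=0$.

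\textbf{Step 2 (extension through products).} Since $B(T_f)(z)-f(z)=\langle(f-f(z))k_z,k_z\rangle$, the key lemma gives at once $\lim_{z\to w}B(T_f)(z)=f(w)$ for $w\in B_\Omega$. For a product $S=T_{f_1}\cdots T_{f_m}$ I would induct on $m$: writing $B(S)(z)=\langle T_{f_2}\cdots T_{f_m}k_z,\;T_{\bar f_1}k_z\rangle$ and replacing $T_{\bar f_1}k_z$ by $\overline{f_1(z)}k_z$ at a cost of norm $\le\|(f_1-f_1(z))k_z\|\to 0$ (bounding $\|T_{f_2}\cdots T_{f_m}\|\le\prod_{i\ge2}\|f_i\|_\infty$), one obtains $B(T_{f_1}\cdots T_{f_m})(z)=f_1(z)\,B(T_{f_2}\cdots T_{f_m})(z)+o(1)\to f_1(w)\cdots f_m(w)$. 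By linearity, $B(S)$ extends continuously to $\Omega\cup B_\Omega$ for every $S$ in the dense subalgebra of finite linear combinations of such products, with boundary values the corresponding combinations of products of restricted symbols. For arbitrary $S\in\mathcal T(\Omega)$ pick $S_n$ in this subalgebra with $S_n\to S$; since $|B(S)(z)-B(S_n)(z)|\le\|S-S_n\|$ for all $z\in\Omega$, the continuous extensions of the $B(S_n)$ form a uniform Cauchy sequence and their limit is a continuous extension of $B(S)$ to $\Omega\cup B_\Omega$. This proves the first assertion.

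\textbf{Step 3 (the homomorphism and its surjectivity).} The map $S\mapsto B(S)|_{B_\Omega}$ is linear, contractive ($\|B(S)\|_\infty\le\|S\|$), $*$-preserving ($B(S^*)=\overline{B(S)}$), kills $\mathcal K(\Omega)$ because a compact operator has Berezin transform vanishing on $\partial\Omega\supseteq B_\Omega$, and is multiplicative \emph{on} $B_\Omega$ since the boundary values multiply on products of Toeplitz operators (Step 2) and both sides depend continuously on $S$. Hence it descends to a contractive, unital, $*$-preserving homomorphism $B:\mathcal T(\Omega)/\mathcal K\to C(B_\Omega)$ with $B(T_f)=f|_{B_\Omega}$. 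Its image contains $\{f|_{B_\Omega}:f\in C(\overline\Omega)\}$, a conjugation-closed, point-separating, unital subalgebra. Moreover $B(\cdot)|_{B_\Omega}$ annihilates the commutator ideal $I$ (commutators map to $0$ on $B_\Omega$), so it factors through $\mathcal T(\Omega)/I\cong C(X)$ from Theorem~\ref{S}; comparing characters (for $w\in B_\Omega$ the evaluation $T_f\mapsto f(w)$ is a well-defined bounded character, because $|B(S)(z)|\le\|S\|$ and $z\to w$) shows $B_\Omega\subseteq X$ and that the induced $C(X)\to C(B_\Omega)$ is the restriction map. As $B_\Omega$ is a closed subset of $X$, Tietze extension makes this restriction onto, so $B$ is surjective. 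Finally, if $H_f$ is compact for all $f\in C(\overline\Omega)$, then $B_\Omega=\partial\Omega$ and Theorem~\ref{S} gives $\mathcal K(\Omega)=I$, $X=\partial\Omega$, so $\eta:\mathcal T(\Omega)/\mathcal K\xrightarrow{\cong}C(\partial\Omega)=C(B_\Omega)$ is an isometric $*$-isomorphism with $\eta(T_f)=f|_{\partial\Omega}$; since $B$ is a homomorphism agreeing with $\eta$ on the generators $T_f$, we get $B=\eta$, an isometry of $C^*$-algebras.

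\textbf{Main obstacle.} The one genuinely non-formal point is the key lemma of Step 1. The hypothesis defining $B_\Omega$ involves only $B(H_f^*H_f)$, and the content is the observation that testing it against the conjugate coordinate functions already forces the quadratic moment of $|k_z|^2$ about $z$ to vanish at $w\in B_\Omega$ — precisely the strength needed to control $\|(f-f(z))k_z\|$ for every continuous $f$. Once this is in hand, the rest is the standard telescoping-plus-density argument, and the identification of the quotient algebra is supplied by Theorem~\ref{S}.
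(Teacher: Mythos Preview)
Your proof is correct and takes a genuinely different route from the paper's. The paper works algebraically via the semi-commutator identity $T_{f\bar g}-T_fT_{\bar g}=H_{\bar f}^{*}H_{\bar g}$: from the exact computation $B(T_fT_{\bar g})=f\bar g$ for holomorphic polynomials $f,g$ one gets $B(T_\phi)|_{B_\Omega}=\phi|_{B_\Omega}$ for $\phi\in\mathbb C[z,\bar z]$, and then Stone--Weierstrass together with iterated use of the identity handles general continuous symbols and products. You instead test the $B_\Omega$ hypothesis only on the functions $\bar\zeta_j$ to extract the concentration estimate $\int_\Omega|\zeta-z|^2|k_z(\zeta)|^2\,dV\to 0$, and from it derive the localization lemma $\|(f-f(z))k_z\|\to 0$ for \emph{every} $f\in C(\overline\Omega)$ by a direct uniform-continuity splitting; this gives $T_{\bar f}k_z\approx\overline{f(z)}\,k_z$ and the telescoping through products follows. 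Your key lemma is sharper than anything the paper isolates---it characterizes $B_\Omega$ as exactly the boundary points where the probability measures $|k_z|^2\,dV$ concentrate near $z$---and it dispenses with Stone--Weierstrass for single Toeplitz operators; in exchange the paper's argument is a few lines shorter. The final isometry clause is handled identically in both proofs, by matching the homomorphism with $\eta$ from Theorem~\ref{S}. One small loose end: you assert that $B_\Omega$ is closed (needed for your Tietze step) without justification; the paper's proof does not address surjectivity explicitly either, so this is a shared gap rather than a defect of your approach.
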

\begin{proof}

At first, recall the following relation between semi-commutators of Toeplitz operators and Hankel operators 
 $$T_{f\bar{g}}-T_fT_{\bar{g}}=H^*_{\bar{f}}H_{\bar{g}}, \quad f, g \in C(\overline{\Omega}).$$
Hence, it follows from our assumptions that 
$$B(T_fT_{\bar{g}})|_{B_\Omega}=B(T_{f\bar{g}})|_{B_\Omega}.$$
 Let $f, g\in \mathbb{C}[z],$ then $B(T_fT_{\bar{g}})=f\bar{g}.$
 Therefore, for any $\phi\in \mathbb{C}[z, \bar{z}]$ we have 
$B(T_{\phi})|_{B_{\Omega}}=\phi|_{B_{\omega}}.$ Hence, using  the Stone-Weierstass theorem we get
$$B(T_\psi)|_{B_{\Omega}}=\psi_{B_{\Omega}}, \quad\psi\in C(\overline{\Omega}).$$
  Combining this with the above formulas, we conclude that
for any $\phi_1, \cdots, \phi_m\in C(\overline{\Omega})$, we have 
$$B(T_{\phi_1}\cdots T_{\phi_m})|_{B_{\omega}}=( \phi_1\cdots \phi_m)|_{B_{\Omega}}.$$
Thus, we have a continuous algebra homomorphism 
$$B: \mathcal{T}(\Omega)/K\to C(B_{\Omega})$$ such that $B(T_f)=f|_{B_{\Omega}}.$

If $H_f$ is compact for all $f\in C(\overline{\Omega}),$ then $B_{\Omega}=\partial\Omega$
and our homomorphisms  coincides with the one from Theorem \ref{S}. In particular, it
is an isometry of $C^*$-algebras.

\end{proof}

\begin{cor}
Let $\Omega\subset \mathbb{C}^n$  a smooth bounded pseoudoconvex domain such that
$H_f$ is compact for all $f\in C(\overline{\Omega}).$ Then
$\Omega$ is BC-regular and the essential norm of any $S\in \mathcal{T}(\Omega)$ equals to $L^{\infty}(\partial\Omega)$
norm of $B(S)|_{\partial{\Omega}}.$
\end{cor}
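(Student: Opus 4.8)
The plan is to read off both assertions from Theorem \ref{Main} together with two standard $C^*$-algebra observations. First I would note that under the standing hypothesis --- $H_f$ compact for every $f\in C(\overline{\Omega})$ --- one has $B_{\Omega}=\partial\Omega$, as recorded in the remarks preceding Theorem \ref{S}. Hence $\Omega\cup B_{\Omega}=\overline{\Omega}$, and the first half of Theorem \ref{Main} says precisely that $B(S)$ extends continuously to $\overline{\Omega}$ for every $S\in\mathcal{T}(\Omega)$; that is the definition of BC-regularity, so the first assertion is done.

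For the essential-norm statement, write $\|S\|_{e}=\inf\{\|S-K\|:K\in\mathcal{K}(A^2(\Omega))\}$ for $S\in\mathcal{T}(\Omega)$. The key point is the identification $\|S\|_{e}=\|S+\mathcal{K}(\Omega)\|_{\mathcal{T}(\Omega)/\mathcal{K}(\Omega)}$. Indeed, since $\mathcal{K}(\Omega)=\mathcal{T}(\Omega)\cap\mathcal{K}(A^2(\Omega))$, the inclusion $\mathcal{T}(\Omega)\hookrightarrow\mathcal{B}(A^2(\Omega))$ descends to an injective $*$-homomorphism $\mathcal{T}(\Omega)/\mathcal{K}(\Omega)\hookrightarrow\mathcal{B}(A^2(\Omega))/\mathcal{K}(A^2(\Omega))$ of $C^*$-algebras, which is therefore isometric; evaluating on the class of $S$ yields the claimed equality.

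Finally I would invoke the second half of Theorem \ref{Main}: since all $H_f$ are compact, $B:\mathcal{T}(\Omega)/\mathcal{K}(\Omega)\to C(\partial\Omega)$ is an isometry of $C^*$-algebras with $B(T_f)=f|_{\partial\Omega}$. Combining the two identities,
$$\|S\|_{e}=\|B(S+\mathcal{K}(\Omega))\|_{C(\partial\Omega)}=\sup_{w\in\partial\Omega}|B(S)(w)|=\|B(S)|_{\partial\Omega}\|_{L^{\infty}(\partial\Omega)},$$
where the last equality uses that $B(S)$ is continuous on $\overline{\Omega}$ and that surface measure on the smooth hypersurface $\partial\Omega$ has full support, so its $L^{\infty}$-norm agrees with the supremum.

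The only step needing genuine care is the second one: checking that $\mathcal{K}(\Omega)$ is exactly $\mathcal{T}(\Omega)\cap\mathcal{K}(A^2(\Omega))$ and that the induced embedding into the Calkin algebra is isometric, so that the quotient norm really does compute the essential norm. Granting that, the corollary is a direct translation of Theorem \ref{Main}.
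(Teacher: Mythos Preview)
Your proposal is correct and is exactly the argument the paper intends: the corollary is stated immediately after Theorem~\ref{Main} without proof, and your derivation---$B_{\Omega}=\partial\Omega$, hence BC-regularity from the first half of Theorem~\ref{Main}, and the essential-norm formula from the isometry in the second half---is the natural reading. Your care in identifying the quotient norm in $\mathcal{T}(\Omega)/\mathcal{K}(\Omega)$ with the essential norm via the isometric embedding into the Calkin algebra is appropriate; alternatively one could cite the standard fact (used in \cite{SSU}) that $\mathcal{T}(\Omega)$ already contains all compact operators on $A^2(\Omega)$, so that $\mathcal{K}(\Omega)=\mathcal{K}(A^2(\Omega))$ and the identification is immediate.
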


The above corollary generalizes theorems 1, 4, and part of theorem 5 from \cite{CS}. 

As shown in [\cite{CS}, Theorem 3] if $\Omega$ is a convex domain with a disc in the boundary and dense strongly pseudoconcex points,
then $\Omega$ is not BC-regular. The following simple result shows a dichotomy for domains with dense strongly pseudoconvex points.

\begin{prop}

Let $\Omega$ be a smooth pseudoconvex domain. Then $B_{\Omega}=\partial\Omega$ if and only if $\Omega$ is BC-regular and
the map $S\to B(S)|_{\partial\Omega}, S\in \mathcal{T}$ is multiplicative. 

Suppose that $B_{\Omega}$ is dense in $\partial\Omega$ (in particular this is the case if
strongly pseudoconvex points are dense in $\partial\Omega.$)
Then $\Omega$ is BC-regular if and only if  $B_{\Omega}=\partial\Omega.$

\end{prop}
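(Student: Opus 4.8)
The forward direction of the first biconditional is immediate from Theorem~\ref{Main}. If $B_{\Omega}=\partial\Omega$, then $\Omega\cup B_{\Omega}=\overline{\Omega}$, so $B(S)$ extends continuously to $\overline{\Omega}$ for every $S\in\mathcal{T}(\Omega)$, i.e.\ $\Omega$ is BC-regular; and the surjective algebra homomorphism $B\colon \mathcal{T}(\Omega)/\mathcal{K}\to C(\partial\Omega)$ furnished by Theorem~\ref{Main} says exactly that $S\mapsto B(S)|_{\partial\Omega}$ is multiplicative. So the substance lies in the converse of the first statement and in the second statement.

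For the converse, assume $\Omega$ is BC-regular and that $\beta\colon S\mapsto B(S)|_{\partial\Omega}$ is multiplicative. I would first note that $\beta$ is then a unital continuous $*$-homomorphism from the $C^{*}$-algebra $\mathcal{T}(\Omega)$ to $C(\partial\Omega)$: linearity and the bound $\|\beta(S)\|_{\infty}\le\|S\|$ are clear, $\beta(\Id)=1$ since $\langle k_{z},k_{z}\rangle=1$, $\beta$ is multiplicative by hypothesis, and $\beta(S^{*})=\overline{\beta(S)}$ because $B(S^{*})(z)=\overline{B(S)(z)}$. Its kernel is a closed ideal of $\mathcal{T}(\Omega)$ which, the target being commutative, contains every commutator, hence contains the commutator ideal $I$. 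Since $\partial\overline{\Omega}=\partial\Omega$ for a smooth bounded domain, Theorem~\ref{S} gives $\mathcal{T}(\Omega)/I\cong C(X)$ with $X\subset\partial\Omega$ closed and $\eta(T_{f})=f|_{X}$, and $\beta$ factors through this quotient, inducing a unital $*$-homomorphism $\overline{\beta}\colon C(X)\to C(\partial\Omega)$. By Gelfand duality $\overline{\beta}$ is induced by a continuous map $r\colon \partial\Omega\to X$, so that, viewing $X\subset\partial\Omega\subset\overline{\Omega}$,
\[
B(T_{\psi})(w)=\psi(r(w))\qquad\text{for all }\psi\in C(\overline{\Omega}),\ w\in\partial\Omega .
\]
(In fact $B(T_{f})|_{B_{\Omega}}=f|_{B_{\Omega}}$ from Theorem~\ref{Main} forces $r$ to fix $B_{\Omega}$ pointwise, but this is not needed.)

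Next I would feed this into the semi-commutator identity used in the proof of Theorem~\ref{Main}; taking $f=g$ there yields $H_{f}^{*}H_{f}=T_{|f|^{2}}-T_{\bar f}T_{f}\in\mathcal{T}(\Omega)$. Hence, for $w\in\partial\Omega$, applying the displayed formula to the first term and multiplicativity followed by that formula to the second,
\[
B(H_{f}^{*}H_{f})(w)=B(T_{|f|^{2}})(w)-B(T_{\bar f}T_{f})(w)=|f(r(w))|^{2}-\overline{f(r(w))}\,f(r(w))=0 .
\]
Thus $B(H_{f}^{*}H_{f})$ vanishes identically on $\partial\Omega$; since $\Omega$ is BC-regular this function extends continuously to $\overline{\Omega}$, so $\lim_{z\to w}B(H_{f}^{*}H_{f})(z)=0$ for every $w\in\partial\Omega$. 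As $f\in C(\overline{\Omega})$ was arbitrary, $B_{\Omega}=\partial\Omega$, proving the first statement. For the second statement, the implication $B_{\Omega}=\partial\Omega\Rightarrow\Omega$ BC-regular is again Theorem~\ref{Main}; conversely, if $\Omega$ is BC-regular and $B_{\Omega}$ is dense in $\partial\Omega$, then for each $f\in C(\overline{\Omega})$ the continuous extension of $B(H_{f}^{*}H_{f})$ to $\overline{\Omega}$ vanishes on the dense set $B_{\Omega}$, hence on all of $\partial\Omega$, so $B_{\Omega}=\partial\Omega$; and the parenthetical case is covered because every strongly pseudoconvex boundary point lies in $B_{\Omega}$.

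The one genuinely substantive step is the passage in the second paragraph from the bare multiplicativity of $S\mapsto B(S)|_{\partial\Omega}$ to the pointwise identity $B(T_{\psi})(w)=\psi(r(w))$; this is precisely where the Salinas--Sheu--Upmeier description of the maximal commutative quotient $\mathcal{T}(\Omega)/I\cong C(X)$ and Gelfand duality are used. Granting that identity, the vanishing of $B(H_{f}^{*}H_{f})$ on $\partial\Omega$ is the one-line computation above, and everything else is the continuous-extension argument already built into the notion of BC-regularity.
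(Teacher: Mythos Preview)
Your proof is correct, but it differs from the paper's in both halves.

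For the converse of the first biconditional, the paper avoids Theorem~\ref{S} and Gelfand duality entirely: since $B(\cdot)|_{\partial\Omega}$ is multiplicative it kills commutators, and for holomorphic $f$ one has $[T_f,T_{\bar f}]=-H^*_{\bar f}H_{\bar f}$, so $B(H^*_{\bar f}H_{\bar f})|_{\partial\Omega}=0$. This is a one-line argument, but it only treats anti-holomorphic symbols directly and leaves the passage to arbitrary $f\in C(\overline{\Omega})$ implicit. Your route through the Salinas--Sheu--Upmeier isomorphism and the induced map $r\colon\partial\Omega\to X$ is heavier machinery, yet it yields $B(H^*_fH_f)|_{\partial\Omega}=0$ for \emph{every} continuous $f$ in one stroke, so no reduction step is needed.

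For the second statement, your argument is actually more direct than the paper's. The paper first shows $B(T_\psi)|_{\partial\Omega}=\psi|_{\partial\Omega}$ (from $B(T_\psi)|_{B_\Omega}=\psi|_{B_\Omega}$, BC-regularity, and density) and then computes $B(H^*_fH_f)=B(|f|^2)-|f|^2$ on $\partial\Omega$ for anti-holomorphic $f$. You instead observe that $H^*_fH_f=T_{|f|^2}-T_{\bar f}T_f\in\mathcal T(\Omega)$, so BC-regularity gives a continuous extension of $B(H^*_fH_f)$ to $\overline{\Omega}$; this extension vanishes on $B_\Omega$ by the very definition of $B_\Omega$, hence on $\overline{B_\Omega}=\partial\Omega$. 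This bypasses the intermediate identification of $B(T_\psi)$ on the boundary and again works for all continuous $f$ simultaneously.
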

\begin{proof}

Suppose that $B: \mathcal{T}(\Omega)/\mathcal{K}\to C(\partial \Omega)$ is multiplicative. Then $B([T_{f}, T_{\bar{f}}])=0$ for a
holomorphic $f.$ Which implies that $B(H^*_{\bar{f}}H_{\bar{f}})=0.$ So, $B_{\Omega}=\partial\Omega$ 
as desired.

If $B_{\Omega}=\partial\Omega$ then Theorem \ref{Main} implies that $\Omega$ is BC-regular.

Now, suppose that $\Omega$ is BC-regular and $B_{\Omega}$ is dense is $\partial\Omega.$ Then for any  $f\in C(\overline{\Omega})$ we have that
$B(f)|_{B_{\Omega}}=f|_{B_\Omega}.$ Since by the assumption $B(f)$ is continuous up to the boundary, we get that
$B(f)=f.$ Now it follows that 
$$B(H_f^*H_f)=B(|f|^2)-|f|^2$$ vanishes on the boundary for all anti-holomorphic $f.$
Hence $\partial\Omega=B_{\Omega}.$

\end{proof}

In view of the above result, it would be interesting to know an example of a smooth  BC-regular domain $\Omega$ for which the $\bar{\partial}$-Neumann
operator is not compact. If the smoothness assumption is dropped, then a polydisc is an example of such a domain [\cite{CS}, Corollary 1].

\begin{acknowledgement}
I am grateful to S.\c{S}ahuto\u{g}lu for explaining results in \cite{CS} and providing  helpful comments.
 Essentially all results (with their proofs) in this note
were suggested by T. Le. 

\end{acknowledgement}

\end{document}